\bmdefine{\bX}{X}
\bmdefine{\ba}{$\alpha$}
\theoremstyle{plain}
\newtheorem{teorema}{Theorem}[section]
\newtheorem*{teorema*}{Theorem}
\newtheorem{lemma}[teorema]{Lemma}
\newtheorem{proposizione}[teorema]{Proposition}
\theoremstyle{definition}
\theoremstyle{remark}
\newtheorem{remark}[teorema]{Remark}
\def \p {\partial}
\def \bpa {\overline{\partial}}
\title[Chern-Ricci flat balanced metrics on small resolutions of Calabi-Yau threefolds]{Chern-Ricci flat balanced metrics on small resolutions of Calabi-Yau threefolds}
\author{Federico Giusti}
\address{Instituto de Ciencias Matemáticas, Calle Nicolás Cabrera 13-15, 28049 Madrid, Spain}
\email{federico.giusti@icmat.es}
\author{Cristiano Spotti}
\address{Department of Mathematics, Aarhus University, Ny Munkegade 118, 8000 Aarhus C, Denmark}
\email{c.spotti@math.au.dk}
\date{\today}
\begin{document}

\begin{abstract} 
Given a nodal Kähler Calabi-Yau threefold, we show, via a gluing construction, that all its - possibly non-Kähler - small resolutions admit Chern-Ricci flat balanced metrics, which among other things solve the conformally balanced equation appearing in the Hull-Strominger system.
\end{abstract}

\maketitle 

\section*{Introduction}

In the study of Calabi-Yau threefolds, ordinary double points have been playing a central role in the attempt of understanding the moduli space of these manifolds. This connection is mainly due to Reid's geometrization conjecture known as \textit{Reid's Fantasy} (see \cite{R}), which states that all compact Calabi-Yau threefolds can be connected through a finite number of \textit{conifold transitions} (introduced by Clemens and Friedman, see \cite{F}), i.e. a procedure consisting of the contraction of a finite family of disjoint $(-1,-1)$-curves in a compact Calabi-Yau threefold, followed by the smoothing of the ordinary double points obtained from the previous step. However, the manifolds resulting from the process do not necessarily carry Kähler metrics, making the study of this singular transitions a problem to be treated, from a metric point of view, in the non-Kähler setting. Indeed, Fu, Li and Yau (in \cite{FLY}) showed that smoothings \emph{sufficiently close} to the singular threefold always carry balanced metrics - i.e. Hermitian metrics with coclosed fundamental form - whose behaviour in a neighborhood of the vanishing cycles is exactly (a rescaling of) the Candelas-de la Ossa-Stenzel metric (see \cite{CO} and \cite{St}). The importance of this construction lies in a conjectural picture from Yau (see \cite{Y1}), which identifies the Hull-Strominger system (introduced in \cite{Hu} and \cite{S} in the framework of heterotic string theory) as a fundamental tool in geometrization problems in non-Kähler Calabi-Yau geometry (i.e. complex manifolds with holomorphically trivial canonical bundle), indicating its solutions as a possible instrument for a metric approach to Reid's fantasy (in this direction, progress was made by Collins, Picard and Yau in \cite{CPY1}). The Hull-Strominger system has proven to be an extremely hard problem to face (Yau has conjectured an algebro-geometric characterization of the existence of solutions in \cite{Y}), and up to now, only few solutions were found (see for example \cite{AGF}, \cite{CPY2}, \cite{FeY}, \cite{FuY} and \cite{LY}).\par

The following note focuses on the opposite direction of Fu, Li and Yau's construction, addressing the problem of producing - via gluing construction (following classical constructions in Kähler geometry as \cite{AP} and \cite{BM}) - Chern-Ricci flat balanced metrics on compact small resolutions of singular Kähler Calabi-Yau threefolds whose singular set consists  of a finite family of ordinary double points. This result is meant to solve the issue described in Section 4 of the recent paper \cite{GS} by the authors. The statement is the following.
\begin{teorema*}\label{main}
Let $(\tilde{M},\tilde{\omega})$ be a Kähler Calabi-Yau nodal threefold (with $\tilde{\omega}$ a singular Calabi-Yau metric), and let $M$ be a compact (not necessarily Kähler) small resolution of $\tilde{M}$. Then for sufficiently small $\varepsilon$, $M$ admits a Chern-Ricci flat balanced metric $\hat{\omega}$ such that $[\hat{\omega}^2]=[p^*\tilde{\omega}^2]$, where $p:M \rightarrow \tilde{M}$ is the resolution map, and $[\omega^2]$ converges to a nef class.
\end{teorema*}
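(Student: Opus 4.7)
The plan is a gluing argument in the spirit of \cite{FLY}, but run in the opposite direction (resolution rather than smoothing) and adapted to the balanced, possibly non-Kähler, setting. Let $p_1,\dots,p_N$ be the nodes of $\tilde M$ and $C_i = p^{-1}(p_i) \simeq \mathbb{P}^1$ the exceptional curves. A neighborhood of each $p_i$ in $\tilde M$ is biholomorphic to the conifold vertex in $V_0 = \{xy-zw=0\} \subset \mathbb{C}^4$, and the corresponding neighborhood of $C_i$ in $M$ to the small resolution $\hat V \simeq \mathrm{Tot}(\mathcal O(-1)^{\oplus 2}\to \mathbb{P}^1)$. On $\hat V$ one has the Candelas--de la Ossa family of Ricci-flat Kähler metrics $\omega^{CO}_\varepsilon$, whose exceptional $\mathbb{P}^1$ has radius $\sim \varepsilon$ and which is asymptotic at infinity to the Ricci-flat Kähler cone metric $\omega^{CO}_0$ on $V_0\setminus\{0\}$. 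The singular Calabi--Yau metric $\tilde\omega$ is itself asymptotic to $\omega^{CO}_0$ near each node, providing the matching needed for gluing.

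Since $M$ need not be Kähler, I would perform the gluing at the level of $(2,2)$-forms. Choosing a radial cutoff $\chi_\varepsilon$ on a transition annulus $\{r_i \in (\varepsilon^a,\varepsilon^b)\}$, set
\[
\Xi_\varepsilon = \chi_\varepsilon\,(\omega^{CO}_\varepsilon)^2 + (1-\chi_\varepsilon)\,(p^*\tilde\omega)^2.
\]
Both summands are closed where defined, so $d\Xi_\varepsilon = d\chi_\varepsilon\wedge\bigl((\omega^{CO}_\varepsilon)^2 - (p^*\tilde\omega)^2\bigr)$ is supported in the annulus and small in $\varepsilon$-weighted Hölder norms, because the two models agree with $(\omega^{CO}_0)^2$ to leading order there. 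Positivity of $\Xi_\varepsilon$ is preserved by the interpolation, and by Michelsohn's bijection between positive $(2,2)$-forms and positive $(1,1)$-forms in complex dimension three, $\Xi_\varepsilon = \omega_\varepsilon^2$ for a unique Hermitian $\omega_\varepsilon$, our approximate solution.

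Next I would look for the exact solution in the form $\hat\omega^2 = \Xi_\varepsilon + d\beta_\varepsilon$ for a real $3$-form $\beta_\varepsilon$ supported near the gluing region, imposing $\|\Omega\|_{\hat\omega}\equiv \mathrm{const}$, where $\Omega$ is the trivialising holomorphic volume form on $M$. For a balanced metric this normalisation is equivalent to Chern-Ricci flatness, and also immediately implies the conformally balanced equation of the Hull-Strominger system. Closedness of $\hat\omega^2$ is automatic; the remaining equation is scalar and of Monge-Ampère type, which I would solve via an implicit function theorem in weighted Hölder spaces tailored to the conical geometry near each $C_i$. The identity $[\hat\omega^2]=[p^*\tilde\omega^2]$ then holds since the correction is exact, and the nef-class statement follows because, as $\varepsilon\to 0$, $\hat\omega^2$ converges to $(p^*\tilde\omega)^2$, a limit of pullbacks of Kähler squares.

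The hard part will be proving \emph{uniform} (in $\varepsilon$) invertibility of the linearisation, with inverse bounds sharp enough to absorb $d\Xi_\varepsilon$. This requires weights that balance the decay of $\omega^{CO}_\varepsilon$ against the rate of $\tilde\omega$ at the nodes, together with a treatment of the potential cokernel arising from the failure of the $\partial\bar\partial$-lemma on non-Kähler $M$: since $i\partial\bar\partial$-potentials do not generate all relevant cohomology there, one must arrange that the freedom in choosing $\beta_\varepsilon$ precisely matches the cohomological part of the error, so that the correction lives in the correct Bott-Chern/Aeppli class and preserves $[p^*\tilde\omega^2]$.
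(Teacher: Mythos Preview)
Your overall strategy (gluing at the $(2,2)$-level, invoking Michelsohn, weighted Hölder analysis near the exceptional curves) matches the paper's, but you have missed the one genuinely hard point, and without it the implicit function theorem step does not close.

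The Candelas--de la Ossa metric satisfies only $\omega_{co,1}=\omega_{co,0}+O(r^{-2})$ at infinity; the slow piece is $4\pi^*\omega_{FS}-6i\p\bpa\log r$, which is \emph{not} $i\p\bpa$-exact (this is exactly the non-K\"ahler obstruction). After any reasonable cut-off one therefore gets, naively, only $\omega_R^2=\omega_{co,0}^2+O(r^{-2})$ and hence $f=\log(\omega_R^3/\omega_{co,0}^3)=O(r^{-2})$ in the transition annulus. In the weighted spaces one needs $\|f\|_{C^{0,\alpha}_{\varepsilon,b+2}}$ to be strictly smaller than the contraction radius, and with an $O(r^{-2})$ potential this fails for every admissible choice of weights; this is precisely the obstruction left open in \cite{GS}. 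The whole content of the present paper is a direct coordinate computation (Section~\ref{sec2}) showing that the slow-decay term $4\pi^*\omega_{FS}-6i\p\bpa\log r$ is $\omega_{co,0}$-\emph{primitive}, and that the particular cut-off at the potential level preserves this primitivity, so that on $C_R$ one actually has $\omega_R^3=\omega_{co,0}^3+O(r^{-4})$ and hence $f=O(r^{\tilde m})$ with $\tilde m=\min\{\lambda_0,4q/p\}$. Your sentence ``the two models agree with $(\omega^{CO}_0)^2$ to leading order there'' glosses over exactly this, and as written the error is not small enough to be absorbed.

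Two further points of divergence. First, your interpolated $\Xi_\varepsilon$ is not closed; the paper instead cuts off the $i\p\bpa$-potentials (both the Hein--Sun potential $\varphi$ on the $\tilde M$ side and the potential for $\omega_{co,1}^2-\omega_{co,0}^2$ on the $\hat X$ side), so the pre-glue $(2,2)$-form is closed by construction and the metric is already balanced before perturbation. Second, the correction is not sought as a general $d\beta$ but via the scalar Fu--Wang--Wu ansatz $\omega_\psi^2=\omega^2+i\p\bpa(\psi\omega)$; the linearisation is then $\Delta_\omega+\tfrac12|\p\omega|_\omega^2+e^f ev_s$, a scalar operator whose uniform invertibility follows from the K\"ahler limit on $\tilde M$. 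The cokernel/Bott--Chern worries you raise do not arise with this ansatz, and the class $[\hat\omega^2]=[p^*\tilde\omega^2]$ is automatic since $i\p\bpa(\psi\omega)$ is $d$-exact.
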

The main difficulty, as discussed in \cite{GS}, is given by the fact that the Candelas-de la Ossa Calabi-Yau metric on the small resolution of the conifold has quadratic decay to the cone, which combined with the absence of an explicit form of the cut-off metric did not allow us to get clearer information on the behaviour of the volume. However, with more careful computations, we were able to see that, despite the slow decay in the cut-off region, the Chern-Ricci potential has a fourth order decay, guaranteed by the (surprising) fact that the cut-off procedure preserves the primitivity of the slow-decay piece of the metric.\par
The outline of the paper is the following. In Section 1 we recall the construction of the pregluing metric, slightly expanding the brief explaination from \cite[Section 4]{GS}. In Section 2 we show how to obtain a refined estimate on the behaviour of the volume of the cut-off asymptotically exactly conical balanced metric obtained on the small resolution of the conifold in the previous section, and quickly walk through the main analytic steps to achieve the wanted Chern-Ricci flat balanced metric.

\par 
\vspace{0.5cm}
{\bf Acknowledgements.} Both the authors are thankful to Villum Fonden for the funding through the Villum Young Investigator 0019098 grant, which in particular funded the first author's PhD position, during which part of this work was carried out.
\par 
The first named author would like to thank Institute of Advanced Study in Mathematics at Zhejiang University, along with Song Sun, for hosting him during his visit during which this paper was partially completed. He also acknowledges support by GNSAGA of INdAM.\par
Both the authors are grateful to Hans-Joachim Hein and Yifan Chen for the useful conversations that lead to the completion of this work.

\numberwithin{teorema}{section}

\section{The pre-gluing metric}\label{2}
In this section we will focus on recalling the construction of the pre-gluing metric, as done in \cite{GS} in the following result.
\begin{proposizione}[Proposition 1.2 \cite{GS}]
    Let $(\tilde{M},\tilde{\omega})$ be a smoothable projective Kähler Calabi-Yau nodal threefold (with $\tilde{\omega}$ a singular Calabi-Yau metric), and let $M$ be a compact (not necessarily Kähler) small resolution of $\tilde{M}$. Then $M$ admits a balanced approximately Chern-Ricci flat metric $\omega$ in the class $[p^*\tilde{\omega}^2]$, where $p$ is the resolution map.
\end{proposizione}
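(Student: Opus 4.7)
The plan is to carry out the entire construction at the level of the squared $(2,2)$-form $\omega^2$, so that $d$-closedness of $\omega^2$---equivalent to the balanced condition on the threefold $M$---is automatic. Concretely, near each exceptional $\mathbb{P}^1$ of $p$ one uses a scaled Candelas--de la Ossa Ricci-flat Kähler metric, and the pullback $p^*\tilde\omega$ away from these $\mathbb{P}^1$'s, interpolating by a cut-off performed on a primitive of the correction, rather than on the $(1,1)$-form itself.

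In more detail: near each node $\tilde q_i\in\tilde M$, identify a neighborhood with a neighborhood of the vertex of the conifold $V_0=\{z_1z_2=z_3z_4\}\subset\mathbb C^4$. The singular Calabi--Yau metric $\tilde\omega$ is asymptotic to the Stenzel Calabi--Yau cone metric $\omega_{co}$ on $V_0$, while on the small resolution the scaled Candelas--de la Ossa metric $\omega_{CO,\varepsilon}$ is asymptotic to $\omega_{co}$ with quadratic decay. On each annular overlap $A_i$ around the exceptional curve, the two closed $(2,2)$-forms $p^*\tilde\omega^2$ and $\omega_{CO,\varepsilon}^2$ can be written as
\begin{equation*}
p^*\tilde\omega^2=\omega_{co}^2+d\tilde\alpha,\qquad \omega_{CO,\varepsilon}^2=\omega_{co}^2+d\alpha_\varepsilon,
\end{equation*}
with weighted $C^{k,\alpha}$ control on the primitives $\tilde\alpha$ and $\alpha_\varepsilon$ coming from the known asymptotic expansions. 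Choosing a smooth cut-off $\chi_\varepsilon$ supported in $A_i$ and equal to $1$ close to the exceptional curve, one defines the pre-gluing $(2,2)$-form
\begin{equation*}
\omega^2:=\omega_{co}^2+d\bigl(\chi_\varepsilon\,\alpha_\varepsilon+(1-\chi_\varepsilon)\,\tilde\alpha\bigr)
\end{equation*}
on $A_i$, extended by $\omega_{CO,\varepsilon}^2$ on the exceptional neighborhood and by $p^*\tilde\omega^2$ on the complement of all $A_i$. By construction this is globally smooth, $d$-closed, and cohomologous to $p^*\tilde\omega^2$.

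For $\varepsilon>0$ sufficiently small, $\omega^2$ is pointwise strictly positive as a $(2,2)$-form---in each regime it differs from a strictly positive square by a term of lower order in the scaled conical norm---so by Michelsohn's theorem on threefolds there is a unique positive Hermitian $(1,1)$-form $\omega$ with $\omega\wedge\omega=\omega^2$. The identity $d(\omega\wedge\omega)=0$ is then exactly the balanced condition, and $[\omega^2]=[p^*\tilde\omega^2]$ is built in.

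It remains to exhibit $\omega$ as \emph{approximately} Chern-Ricci flat. Outside the gluing annuli $\omega$ equals either $\omega_{CO,\varepsilon}$ or $p^*\tilde\omega$, both Ricci-flat Kähler, so the Chern-Ricci form $\rho_\omega=-i\partial\bar\partial\log\det g_\omega$ is supported only in the $A_i$ and is bounded by the cut-off error on $\alpha_\varepsilon-\tilde\alpha$. The expected obstacle---and the reason this proposition needs the companion analysis carried out in Section~2 of the present paper---is that $\omega_{CO,\varepsilon}$ decays to $\omega_{co}$ only at the slow rate $O(r^{-2})$, which naively yields a Chern-Ricci potential too large to drive the perturbation argument leading to the genuine Chern-Ricci flat metric. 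The refined observation that saves the estimate is that the slow-decay correction of $\omega_{CO,\varepsilon}$ is primitive with respect to $\omega_{co}$ and that primitivity is preserved by the cut-off, effectively upgrading the Chern-Ricci potential to fourth-order decay.
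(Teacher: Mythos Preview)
Your outline captures the essential strategy of the paper---work at the level of the $(2,2)$-form, cut off to the cone metric, invoke Michelsohn to extract a balanced $(1,1)$-form---and your closing remarks on primitivity correctly anticipate the refinement in Section~2. There is, however, one genuine technical gap in the gluing step as you have written it.

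You write the corrections as $d\tilde\alpha$ and $d\alpha_\varepsilon$ for generic real $3$-forms and then set $\omega^2=\omega_{co}^2+d\bigl(\chi_\varepsilon\alpha_\varepsilon+(1-\chi_\varepsilon)\tilde\alpha\bigr)$. The problem is that $d(\chi\alpha)=\chi\,d\alpha+d\chi\wedge\alpha$, and even if $d\alpha$ is pure $(2,2)$, the error $d\chi\wedge\alpha$ will in general have $(3,1)+(1,3)$ components (take $\alpha$ of type $(2,1)+(1,2)$). Michelsohn's square-root construction requires a genuine positive $(2,2)$-form, so your glued object is not yet of the right type. The paper avoids this by never using bare $d$-primitives: on the $\tilde M$ side it invokes Hein--Sun/Zhang to write $\tilde\omega-\omega_{co,0}=i\partial\bar\partial\varphi$ and cuts off $\varphi$ (so the result stays K\"ahler, hence $(1,1)$); on the $\hat X$ side it exploits the explicit expansion $\omega_{co,1}^2-\omega_{co,0}^2=2\omega_{co,0}\wedge\eta+i\partial\bar\partial(\cdots)$ with $\eta$ a \emph{closed} real $(1,1)$-form, and then replaces the factor $\omega_{co,0}=\tfrac32 i\partial\bar\partial r^2$ by $\tfrac32 i\partial\bar\partial(r^2\xi_R)$. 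Both moves keep the bidegree $(2,2)$ after cutting off. Your argument is easily repaired along these lines, but as stated the step ``by Michelsohn's theorem there is a unique positive Hermitian $(1,1)$-form $\omega$'' does not yet apply.

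A smaller structural difference: the paper performs \emph{two} separate cut-offs (first making $\tilde\omega$ exactly conical near the node, then separately making $\omega_{co,1}$ exactly conical at infinity) with an intermediate region where the metric literally equals $\omega_{co,0}$, whereas you interpolate the two directly in a single annulus. Both schemes work, but the paper's version cleanly decouples the two error sources, which is convenient when tracking the decay rates $\lambda_0$ and $4q/p$ independently.
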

Before describing the steps to prove the above Proposition, we shall briefly recall the geometry of the \emph{conifold}, i.e. the singularity model of ordinary double points on threefolds, in order to introduce some crucial ingredients for our construction.

\subsection{The geometry of the conifold}
Ordinary double points on threefolds have been vastly studied in literature, hence we can gather here many information extremely useful to describe the singularity model. First of all, we shall recall that ordinary double points are modelled on the singular algebraic variety $X$, given by
\[
X:=\{ z_1^2+z_2^2+z_3^2+z_4^2=0\} \subseteq \mathbb{C}^4,
\]
known as the standard \textit{conifold}, whose only singular point is the origin (whose name is now clear from the equation describing it). While this description appears to be the easiest, it is not the most convenient when it comes to the study of small resolutions of these singularities. Hence we shall focus on an alternate one, identifying the conifold with the total space of a rank $2$ vector bundle over $\mathbb{P}^1$, with its zero-section collapsed to a point. Indeed, following for example \cite{CPY1}, it is easily seen that
\[
X\simeq \mathcal{O}_{\mathbb{P}^1}(-1)^{\oplus 2} \setminus \mathbb{P}^1,
\]
which immediately presents the total space $\hat{X}$ of the bundle $\mathcal{O}_{\mathbb{P}^1}(-1)^{\oplus 2}$ as the model for small resolutions of ordinary double points, identifying $\mathbb{P}^1$ as the exceptional set, which we shall refer to as the \emph{exceptional curve}.
\begin{remark}
    The reason why we always use the plural for small resolutions is because the conifold admits two bimeromorphic non-biholomorphic small resolutions, related by a transformation known as \emph{Atiyah-flop} (see \cite{A}, but also \cite{CO} and \cite{TY}). This in particular implies that a singular threefold with $m$ ordinary double points will admit $2^m$ holomorphically distinct small resolutions.
\end{remark}
If we now call $\pi:\hat{X}\rightarrow \mathbb{P}^1$ the bundle projection, $\omega_{FS}$ the Fubini-Study metric on $\mathbb{P}^1$, and $h_{FS}$ the line-bundle metric naturally induced on $\mathcal{O}_{\mathbb{P}^1}(-1)$ through the identification
\[
\mathcal{O}_{\mathbb{P}^1}(-2)\simeq K_{\mathbb{P}^1},
\]
we can define locally the function
\begin{equation}\label{defr}
    r(z,u,v)^3:=|u|_{h_{FS}}^2+|v|_{h_{FS}}^2=h_{FS}(z)(|u|^2+|v|^2)=(1+|z|^2)(|u|^2+|v|^2),
\end{equation}
where $(u,v)$ are fibre coordinates and $z$ is a coordinate on $\mathbb{P}^1$. This definition actually extends globally to the whole $\mathcal{O}_{\mathbb{P}^1}(-1)^{\oplus 2}$ minus the zero section, and showcases $r$ as the conical distance from the singularity of $X$. In particular, it allows us to produce the conical Calabi-Yau metric 
\begin{equation}
    \omega_{co,0}:=\frac{3}{2}i\p\bpa r^2,
\end{equation}
which we shall refer to as the \emph{standard cone metric}. This metric will play a crucial role in our construction, as it will act as a \emph{bridge} in the same way the flat metric does in the orbifold setting faced in \cite{GS} as well as in many Kähler scenarios such as in \cite{AP} and \cite{Sz}.
\begin{remark}
    A further description of the conifold can be given as the total space (minus the zero section) of the line bundle $$ \mathcal{O}_{\mathbb{P}^1 \times \mathbb{P}^1} (-1) \rightarrow \mathbb{P}^1 \times \mathbb{P}^1,$$
     The relevance of this identification lies in the fact that it presents the standard cone metric $\omega_{co,0}$ as the corresponding cone metric of the Sasaki-Einstein structure on the link $L\simeq S^3\times S^2$.
\end{remark}
With all this in mind, we shall now start walking through the construction of the pregluing metric, where at each step we shall recall the further results and ingredients we will need.\par
The setting is the following: let $\tilde{M}$ be a projective Kähler Calabi-Yau nodal threefold endowed with $\tilde{\omega}$ a singular Calabi-Yau metric and let $M$ be a compact (not necessarily Kähler) small resolution. For simplicity, in what follows, we will assume to be working with just one singularity, but everything works in the same way if we consider a finite family of ordinary double points. Also, thanks to the identification of a neighborhood of the ODP with the a nighborhood of the origin in the singularity model $X$, we can easily extend the function $r$ introduced above on $X$ to a smooth function on the whole $\tilde M$, which we will still indicate with $r$.
\subsection{Step 1}
The first step consists in modifying the metric in a neighborhood of the singularity in order to make it exactly conical (i.e. equal to $\omega_{co,0}$) around the ODP, while maintaining the metric to be "good", which in our setting means balanced and \emph{approximately} Chern-Ricci flat. This can be achieved without much difficulty thanks to the following result from Hein and Sun (see \cite{HS}) and Zhang \cite{Z}. 

\begin{teorema}[\cite{HS, Z}]\label{hs}
Let $\tilde{M}$ be a singular threefold whose singular set is a finite family of ODPs endowed with a Kähler Calabi-Yau metric $\tilde{\omega}$ on its smooth part $M_{reg}$. Then for every singular point $x \in \tilde{M}\setminus M_{reg}$ there exist a constant $\lambda_0 >0$, neighborhoods $x \in U_x \subseteq \tilde{M}$ and $0 \in V_x \subseteq X$, and a biholomorphism $P: V_x \setminus \{0\} \rightarrow U_x \setminus \{x \}$ such that
\[
P^*\tilde{\omega}-\omega_{co,0}=i\partial\overline{\partial} \varphi, \qquad \text{for some} \> \varphi \in C_{2+\lambda_0}^{\infty},
\]
where $r$ is the conical distance from the singularities and $C_{2+\lambda_0}^{\infty}$ is the space of smooth functions with decay rate at zero of $2+\lambda_0$ (i.e. an $f \in C_{2+\lambda_0}^{\infty}$ is a smooth function such that nearby zero it holds $|\nabla^k f| \leq cr^{2+\lambda_0-k}$ for all $k \geq 0$).
\end{teorema}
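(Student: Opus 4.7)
The plan is to combine Hein--Sun's general regularity theory for non-collapsed Ricci-flat Kähler metrics with the specific local structure of ordinary double points. Broadly, I would first identify the tangent cone of $(\tilde M, \tilde\omega)$ at each ODP as the standard Calabi--Yau cone $(X, \omega_{co,0})$, then use this to produce the biholomorphism $P$ matching analytic germs, and finally pass to a complex Monge--Ampère equation on the cone to upgrade a soft convergence to polynomial decay in weighted Hölder spaces.

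\emph{Tangent cone identification.} By the regularity theory for non-collapsed Gromov--Hausdorff limits of Ricci-flat Kähler metrics, the metric completion of $(M_{reg}, \tilde\omega)$ at each singular point admits a unique tangent cone, which is itself a Ricci-flat Kähler cone. On a complex $3$-fold with an ODP germ, a cone structure compatible with the analytic germ of $X$ must have link $S^2\times S^3$ endowed with a Sasaki--Einstein metric; uniqueness of such a Sasaki--Einstein structure on $S^2\times S^3$ pins down the tangent cone as $(X, c\,\omega_{co,0})$, the constant $c$ being absorbed by rescaling. The map $P:V_x\setminus\{0\}\to U_x\setminus\{x\}$ is then built from any holomorphic identification of germs (the analytic definition of an ODP), adjusted so that $P^*\tilde\omega$ is close to $\omega_{co,0}$ to leading order along the radial direction of the cone.

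\emph{Reduction to Monge--Ampère.} Since $V_x\setminus\{0\}$ is Stein (an open subset of the affine variety $X$ with a point removed) and the closed $(1,1)$-form $P^*\tilde\omega-\omega_{co,0}$ is smooth there, the global $\partial\bpa$-lemma for Stein manifolds produces a smooth function $\varphi$ with $P^*\tilde\omega-\omega_{co,0}=i\partial\bpa\varphi$. Both metrics are Calabi--Yau with respect to (proportional) restrictions of the ambient holomorphic volume form, so after normalization $\varphi$ solves the complex Monge--Ampère equation $(\omega_{co,0}+i\partial\bpa\varphi)^3=\omega_{co,0}^3$, with $\varphi=o(1)$ near $0$ from the tangent-cone step.

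\emph{Polynomial decay via linearization.} Linearizing at $\varphi=0$ gives the scalar Laplacian $\Delta_{\omega_{co,0}}$ on the cone, which separates variables in $(r,\text{link})$; its indicial roots are determined by the spectrum of an elliptic operator on the Sasaki--Einstein link $(S^2\times S^3, g_{SE})$. Writing $\lambda_0>0$ for the smallest positive indicial root, weighted Schauder theory on asymptotically conical manifolds yields a Fredholm mapping $\Delta_{\omega_{co,0}}:C^{k+2,\alpha}_{2+\lambda_0}\to C^{k,\alpha}_{\lambda_0}$, and bootstrapping this linear decay through the quadratic error of the Monge--Ampère equation produces $\varphi\in C^\infty_{2+\lambda_0}$. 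The main obstacle is precisely this last step: obtaining the sharp decay $r^{2+\lambda_0}$ rather than mere boundedness requires both the spectral gap $\lambda_0>0$ on the link and careful book-keeping of weights through the nonlinear iteration.
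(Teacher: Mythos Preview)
The paper does not prove this theorem at all: it is quoted as a black-box input from Hein--Sun \cite{HS} (with the smoothability hypothesis removed by Zhang \cite{Z}), and is only \emph{used} in Step~1 of the pregluing construction. There is therefore no ``paper's own proof'' to compare against; your sketch is an attempt to outline the argument of \cite{HS} itself.

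As such an outline it is broadly on target, but two of the steps contain genuine errors that would need to be repaired. First, your tangent-cone identification hinges on the claim that the Sasaki--Einstein structure on $S^2\times S^3$ is unique; this is false---there are infinitely many inequivalent Sasaki--Einstein metrics on $S^2\times S^3$ (the $Y^{p,q}$ family)---so the identification of the metric tangent cone with $(X,\omega_{co,0})$ requires substantially more input. In \cite{HS} this is handled via the Donaldson--Sun theory linking metric tangent cones to algebraic degenerations together with a rigidity argument for the specific cone, and Zhang's contribution is precisely to make the polynomial convergence work without the smoothability assumption. Second, $V_x\setminus\{0\}$ is \emph{not} Stein: its homotopy type is that of the link $S^2\times S^3$, which has nontrivial $H_5$, violating the Andreotti--Frankel bound for Stein manifolds of complex dimension $3$. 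The $\p\bpa$-lemma you invoke therefore needs a different justification (in \cite{HS} one works instead with the specific cohomology of the link and the asymptotics of the two Ricci-flat metrics). The final step---bootstrapping through the Monge--Amp\`ere nonlinearity using the indicial-root gap on the link---is the correct mechanism, and your description of it is accurate.
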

\begin{remark}
    The above Theorem as originally proved by Hein and Sun, requires the threefold to be smoothable. However, the hypothesis of smoothability can actually be lifted, thanks to a recent result of Zhang (see \cite{Z}, Theorem $1.4$) which extends the result also to the non-smoothable (local) case. 
\end{remark}
Thus, assuming that the neighborhood $U_x$ in Theorem \ref{hs} contains $\{r\leq 1\}$, for every $\varepsilon>0$ we can consider a smooth cut-off function $\chi_\varepsilon(r):=\chi(\varepsilon^{-p} r)$, with
\[
    \chi(y):=
        \begin{cases}
        0 \qquad & \text{on} \> [0,1], \\
        \text{non decreasing} \qquad & \text{on} \> [1,2], \\
        1 \qquad & \text{on} \> [2,+\infty].
        \end{cases}
\]
With this we can easily define $\tilde{\omega}_\varepsilon:=\omega_{co,0}+i\p\bpa(\chi_\varepsilon(r)\varphi)$ on $\{r\leq 1\}$, which clearly extends to the whole $\tilde M$ as $\tilde{\omega}$, providing a close $(1,1)$-form on the whole $\tilde M$ which is positive outside the cut-off region. It is however straightforward to notice that on $C_\varepsilon:=\{\varepsilon^p<r<2\varepsilon^p\}\subseteq \tilde{M}$ it holds
\[
\tilde{\omega}_\varepsilon=\omega_{co,0}+O(r^{\lambda_0}),
\]
guaranteeing the positivity also in the cut-off region $C_\varepsilon$ up to choosing sufficiently small $\varepsilon$, showing that $\tilde\omega_\varepsilon$ is a Kähler (hence also balanced) metric on $\tilde M$ coinciding with $\omega_{co,0}$ around the singularity. Morever, it is clear that $\tilde\omega_\varepsilon$ is Calabi-Yau outside the cut-off region, while on $\{\varepsilon^p<r<2\varepsilon^p\}$ it is easily noticed that $\det (\tilde\omega_\varepsilon)=1+O(r^{\lambda_0})$ showing that the Chern-Ricci potential of $\tilde\omega_\varepsilon$ decays as $O(r^{\lambda_0})$, making the metric \emph{approximately} Chern-Ricci flat.

\subsection{Step 2}\label{step2}
In this second step we will work instead on the singularity small resolution model $\hat{X}$, with the aim of producing a metric suitable to be glued to the one constructed in Step 1, in order to produce a global balanced \emph{approximately} Chern-Ricci flat metric on the small resolution $M$. The main ingredient here is the asymptotically conical Calabi-Yau metric $\omega_{co,1}$ constructed by Candelas and de la Ossa in \cite{CO}, which takes the form
\[
\omega_{co,1}=i\p\bpa f_1(r^3)+4\pi^*\omega_{FS}.
\]
Its property of being asymptotically conical is highlighted by the following expansion for the potential $f_1$, given in \cite{CPY1}.
\begin{lemma}\label{expansion}
For $y\gg 1$, the function $f_1(y)$ has a convergent expansion
\[
f_1(y)=\frac{3}{2}y^{\frac{2}{3}}-2\log(y)+\sum_{n=0}^{+\infty}c_n y^{-\frac{2n}{3}}.
\]
\end{lemma}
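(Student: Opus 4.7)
The plan is to extract the expansion from the nonlinear ODE satisfied by $f_1$, which is nothing but the Calabi-Yau Monge-Ampère equation reduced under the symmetries of the Candelas-de la Ossa ansatz $\omega_{co,1}=i\partial\bar\partial f_1(r^3)+4\pi^*\omega_{FS}$ on $\hat X$.

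First, I would reduce the Ricci-flat condition $\omega_{co,1}^3=c\,\Omega\wedge\bar\Omega$ (with $\Omega$ the standard holomorphic three-form on $\hat X$) to a scalar equation in $y=r^3$. Because the ansatz is $SU(2)\times SU(2)\times U(1)$-invariant, $\det(\omega_{co,1})$ is a rational expression in $y$, $f_1'(y)$ and $f_1''(y)$, so the Monge-Ampère condition becomes a single second-order ODE. Following Candelas-de la Ossa, setting $F(y):=y\,f_1'(y)$ one can integrate once to arrive at an algebraic relation of the form
\[
F(y)^3+6\,F(y)^2=y^2+\mathrm{const},
\]
where the constant is fixed by the resolution parameter (normalized once and for all).

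Next, I would analyze this cubic at infinity. Setting $s:=y^{-2/3}$ and $G:=s\,F$, the equation becomes $G^3+6s\,G^2-1+\mathrm{const}\cdot s^3=0$, which at $s=0$ has the simple root $G=1$. The analytic implicit function theorem then produces $G(s)=1+\sum_{k\ge 1}g_k\,s^k$ convergent near $s=0$, so $F(y)=y^{2/3}\bigl(1+\sum_{k\ge 1}g_k\,y^{-2k/3}\bigr)$ converges for $y\gg 1$. Integrating $f_1'(y)=F(y)/y$ term by term then yields the leading term $\tfrac{3}{2}y^{2/3}$ (from the $y^{-1/3}$ contribution), a logarithmic term (from the $y^{-1}$ contribution), and a convergent power series in $y^{-2/3}$ for the remainder. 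The specific constants $\tfrac{3}{2}$ and $-2$ are forced by matching $\omega_{co,1}$ to the cone metric $\omega_{co,0}=\tfrac{3}{2}i\partial\bar\partial r^2$ at infinity, and by the presence of the $4\pi^*\omega_{FS}$ piece in the ansatz (consistent with $\pi^*\omega_{FS}=\tfrac{i}{2}\partial\bar\partial\log(1+|z|^2)$ contributing a $\log y$ to the potential at leading order along the fibres).

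The main obstacle I expect is pinning down the exact integrated form of the Calabi-Yau ODE and the correct normalization constant coming from the resolution parameter; once the algebraic relation for $F$ is in hand, both the formal structure of the expansion and its convergence are essentially mechanical consequences of the implicit function theorem. An alternative, more direct route would be to plug the ansatz $f_1(y)=\tfrac{3}{2}y^{2/3}-2\log y+\sum_{n\ge 0}c_n y^{-2n/3}$ into the ODE and verify that the coefficients are uniquely determined by a recursion with at most geometric growth, yielding a positive radius of convergence in the variable $y^{-2/3}$ and hence the claimed expansion.
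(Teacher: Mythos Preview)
The paper does not actually prove this lemma; it is quoted from \cite{CPY1} (Collins--Picard--Yau), who in turn follow Candelas--de la Ossa. Your sketch is exactly the argument found there: reduce the Ricci-flat condition under the cohomogeneity-one symmetry to an ODE, integrate once to obtain a cubic algebraic relation for $F=y f_1'$, apply the analytic implicit function theorem at $s=y^{-2/3}=0$ to get a convergent series for $F$, and integrate $f_1'=F/y$ termwise to produce the leading $\tfrac{3}{2}y^{2/3}$, the $\log y$ term, and the convergent tail in $y^{-2/3}$. So your proposal is correct and coincides with the proof the paper defers to; the only caveat is that you should double-check the precise coefficients in the integrated cubic (the ``$6$'' and the constant) against the normalizations in \cite{CO} or \cite{CPY1}, since these depend on conventions for the ansatz and the holomorphic volume form, but this does not affect the structure of the argument.
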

In particular this Lemma implies that, away from the exceptional curve (i.e. for large values of $r$, here intended as the conical distance on the whole Calabi-Yau cone $X$) we can write
\begin{equation}\label{espco}
\omega_{co,1}=\omega_{co,0}-6i\p\bpa\log r +4\pi^*\omega_{FS}+i\p\bpa(O(r^{-2})),
\end{equation}
showing \emph{en passant} that asymptotically it holds
\begin{equation}
    \omega_{co,1}=\omega_{co,0}+O(r^{-2}).
\end{equation}
\begin{remark}
    While the fact that the small resolution $M$ needs not to be Kähler can be shown algebraically (see for example \cite[Remark 4.5]{GS}), the expansion in equality \eqref{espco} already is a symptom of this, as it shows that $\omega_{co,1}-\omega_{co,0}$ is \emph{not} asymptotically exact, not allowing us to perform a Kähler cut-off to the cone metric $\omega_{co,0}$.
\end{remark}
Following the above Remark, since we still want to perform a cut-off to the cone metric $\omega_{co,0}$, we consider instead the square of $\omega_{co,1}$, which can be written as
\begin{equation}
    \omega_{co,1}^2=\omega_{co,0}^2+2i\omega_{co,0}\wedge(4\pi^*\omega_{FS}-6i\p\bpa\log r)+i\p\bpa(O(r^{-2}\log r)),
\end{equation}
which has now $i\p\bpa$-exact difference with the square of the cone metric.\par
We can then introduce, for all $R>0$ sufficiently large, a cut-off function $\xi_R(r^2):=\xi(R^{-2}r^2)$, where
\[
    \xi(y):=
        \begin{cases}
        1 \qquad & \text{on} \> [0,1/16], \\
        \text{non increasing} \qquad & \text{on} \> [1/16,1/4], \\
        0 \qquad & \text{on} \> [1/4,+\infty],
        \end{cases}
\]
and define on $\{r\geq R/4\}$
\begin{equation}\label{cutco}
    \omega_R^2:=\omega_{co,0}^2+2i\p\bpa(3/2r^2\xi_R(r^2))\wedge(4\pi^*\omega_{FS}-6i\p\bpa\log r)+i\p\bpa(\xi_R(r^2)O(r^{-2}\log r)),
\end{equation}
which again naturally extends to the whole $\hat{X}$ as $\omega_{co,1}$.\par
Exactly as above, we have produced a closed form which is also positive outside of the region $C_R:=\{R/4<r<R/2\}\subseteq \hat{X}$ since it corresponds to the square of a Kähler metric. Regarding the cut-off region, from expression \eqref{cutco} it is not hard to notice that it holds
\begin{equation}\label{roughest}
\omega_R^2=\omega_{co,0}^2+O(R^{-2}),
\end{equation}
ensuring the positivity of $\omega_R^2$, implying in turn that writing it as the square of a real $(1,1)$-form makes sense (thanks to the work of Michelsohn in \cite{M}) and that $\omega_R$ defines indeed an exactly asymptotically conical balanced metric on $\hat{X}$. On top of this, Kähler Ricci-flatness holds outside of $C_R$, while on $C_R$ we can combine identity \eqref{roughest} and Michelsohn's result (as in \cite[Remark 2.8]{GS}) to see that it holds $\det(\omega_R)=1+O(r^{-2})$, giving us that $\omega_R$ is also approximately Chern-Ricci flat.
\begin{remark}
    This step is slightly different from how it was carried out in \cite{GS}. Said difference consists in the placement of the cut-off function in the $i\p\bpa$-potential: the choice made here will be more suitable for the computations that will take place in Section \ref{sec2}.
\end{remark}
\subsection{Step 3}
Finally, exactly as in \cite{GS}, by referring to the "small" coordinates around $x$ in $\tilde M$ with $w$ and to the "big" coordinates far from the exceptional curve in $\hat{X}$, we can take the biholomorphism between cones identifying $\tilde{M}\supseteq\{\varepsilon^p/2<r(w)<\varepsilon\}\simeq \{R/2<r(\zeta)<R\}\subseteq \hat{X}$, exactly as in \cite{GS} yields the rescaling factor $\lambda=\frac{\varepsilon^{2p}}{R^2}$ which allows us to patch together $\lambda\omega_R$ with $\tilde{\omega}_\varepsilon$ on the identified region, producing a global balanced metric $\omega=\omega_{\varepsilon,R}$ on the whole $M$ described as
\[
\omega=
\begin{cases} \lambda\omega_{R} \qquad & \text{on} \> r(\zeta)\leq R/2, \\
\omega_{co,0} \qquad & \text{on}\> \varepsilon^p/2 \leq r(w) \leq \varepsilon^p, \\ 
\omega_\varepsilon \qquad & \text{on} \> r(w) \geq 2\varepsilon^p,
\end{cases}
\]
which we might informally refer to as the \emph{connected sum} $\lambda\omega_R\#\tilde\omega_\varepsilon$.\par
At this point, following again \cite{GS}, we unify the parameters by imposing $R:=\varepsilon^{-q}$, with $q>0$, and obtain $\lambda=\varepsilon^{2(p+q)}$, together with the estimate
\begin{equation}
    \omega=\omega_{co,0}+O(r^m)
\end{equation}
on the whole gluing region $\{\varepsilon^p/4<r(w)<\varepsilon^2\}$, where $m=\min\{\lambda_0,2\frac{q}{p}\}$. As in \cite{GS}, we can also deduce an estimate for the behaviour of the Chern-Ricci potential $$f:=\log\left(\frac{i\Omega\wedge\overline{\Omega}}{\omega^3}\right)\underset{loc.}{=}\det(\omega)=O(r^m)$$ (where $\Omega$ is the holomorphic volume for $M$ described as in \cite[Subsection 2.5]{GS}), i.e. $\omega$ is a balanced approximately Chern-Ricci flat metric on $M$.\par
However, as discussed in \cite[Section 4]{GS}, this behaviour is not good enough in order to be able to deform (with our choice of an ansatz) the metric $\omega$ to a genuine Chern-Ricci flat balanced metric in the balanced class $[\omega^2]=[p^*\tilde\omega^2]$, hence in the next section we will first show how in reality, the metric obtained by cutting-off as we did is already behaving correctly for us to conclude.

\section{Improving the behaviour of the Chern-Ricci potential and deforming to a genuine Chern-Ricci flat balanced metric}\label{sec2}
In this final Section we will first show how to improve the estimate on the Chern-Ricci potential, followed by a brief discussion on how now the deformation argument allows us to actually obtain a Chern-Ricci flat balanced metric on $M$.

\subsection{Improving the estimate on the Chern-Ricci potential}\label{impr}
Let us go back to the notation of Subsection \ref{step2}, hence we initally forget that we chose $R=\varepsilon^{-q}$, and (just to lighten the notation) go back to work with $\omega_R$ on $\hat{X}$. We then fix a point $x_o \in \hat{X}$ in the cut-off region $C_R$, and choose coordinates $(z,u,v)$ (defined on an open set containing $x_o$) such that $z(x_o)=0$. Our goal is to write in this coordinates the slow-decay part of $\omega_R^2$, in order to understand better how it appears in $\omega_R$ (which up to now we are not able to write it explicitly) and ultimately how it contributes to the volume.\par
First of all the choice of the coordinate $z$ gives us that
\begin{equation}\label{derivateh}
    h_{FS}(x_o)=1, \qquad \p h_{FS}(x_o)=0,\qquad i\p\bpa h_{FS}(x_o)=idz\wedge d\bar{z},
\end{equation}
implying also that 
$$r_o^3:=r^3(x_o)=|u_o|^2+|v_o|^2,$$
with $(u_o,v_o)$ the fibre coordinates corresponding to $x_o$. With this in mind, we can obtain the following expressions at $x_o$:
\begin{equation}\label{conocoord}
\begin{aligned}
    (\omega_{co,0})_{|_{x_o}}=\left(\frac{3}{2}i\p\bpa r^2\right)_{|_{x_o}}= & ir_o^{-1}\left( \left(1-\frac{r_o^{-3}}{3}|u_o|^2\right)du\wedge d\bar u + \left(1-\frac{r_o^{-3}}{3}|v_o|^2\right)dv\wedge d\bar v \right.\\
    & \left. -\frac{r_o^{-3}}{3}(\bar u_o v_o du\wedge d\bar v +u_o \bar v_0 dv\wedge d\bar u)\right)+r_o^2idz\wedge d\bar z;
\end{aligned}
\end{equation}
\begin{equation}\label{logcoord}
    \begin{aligned}
        (6i\p\bpa\log r)_{|_{x_o}}= & 2ir_o^{-3}((1-r_o^{-3}|u_o|^2)du\wedge d\bar u + (1-r_o^{-3}|v_o|^2)dv\wedge d\bar v  \\ & -r_o^{-3}(\bar u_o v_odu\wedge d\bar v + u_o\bar v_o dv\wedge d\bar u))+2idz\wedge d\bar z;        
    \end{aligned}
\end{equation}
\begin{equation}
    \begin{aligned}
        (4\pi^*\omega_{FS})_{|_{x_o}}=4(i\p\bpa \log h_{FS})_{|_{x_o}}=4\left(\frac{1}{h_{FS}^2}i\p h_{FS}\wedge\bpa h_{FS}+\frac{1}{h_{FS}}i\p\bpa h_{FS}\right)_{|_{x_o}}=4idz\wedge d\bar z.
    \end{aligned}
\end{equation}
We now want to introduce a change of coordinates on the fibres in order to make the cone metric $\omega_{co,0}$ into the identity at $x_o$. As we will see, this choice will end up making diagonal at the same time also the other significant pieces in the description of $\omega_R^2$. Consider then the new (complex) coordinates $(x,y)$ on the fibres, related to $(u,v)$ by
\[
    u=u_o \sqrt{3/2}r_o^{-1}x-r_o^{-1}\bar v_o y, \qquad
    v=\sqrt{3/2}r_o^{-1}v_o x+r_o^{-1}\bar u_o y,
\]
along with a rescaling of the coordinate $z$ by a factor $r_o^{-1}$, which we shall do without renaming of the coordinate. With these new coordinates $(x,y,z)$, we obtain 
\begin{equation}\label{conoid}
    (\omega_{co,0})_{|_{x_o}}=idx\wedge d\bar x+idy\wedge d\bar y+ idz\wedge d\bar z;
\end{equation}
\begin{equation}\label{logdiag}
    (6i\p\bpa\log r)_{|_{x_o}}=2r_o^{-2}(idy\wedge d \bar y+idz\wedge d\bar z);
\end{equation}
\begin{equation}\label{fsrisc}
    (4\pi^*\omega_{FS})_{|_{x_o}}=4r_o^{-2}idz\wedge d\bar z,
\end{equation}
which are now suitable for our purpose. In the first place we directly get the following remark.
\begin{remark}
    Identities \eqref{conoid}, \eqref{logdiag} and \eqref{fsrisc} allow us to immediately notice that the slow decay term 
    $$4\pi^*\omega_{FS}-6i\p\bpa\log r$$
    is $\omega_{co,0}$-primitive. This primitivity can however be obtained without computations applying the work of Goto (see \cite{Go}).
\end{remark}
Moreover, if we expand
\begin{equation}\label{espcut}
    \begin{aligned}
        \frac{3}{2}i\p\bpa(r^2\xi_R(r^2))= & \left(2\frac{r^2}{R^2}\xi'(r^2/R^2)+\frac{r^4}{R^4}\xi''(r^2/R^2)\right)6i\p r \wedge\bpa r\\
        & + \left(\xi(r^2/R^2)+\frac{r^2}{R^2}\xi'(r^2/R^2) \right)\omega_{co,0},
    \end{aligned}
\end{equation}
and recall (say from \cite{CPY1}) that
\begin{equation}\label{idcpy}
    6i\p r \wedge\bpa r=\omega_{co,0}-3r^2i\p\bpa\log r,
\end{equation}
combining identities \eqref{conoid}, \eqref{logdiag}, \eqref{espcut} and \eqref{idcpy} yields
\begin{equation}\label{expcutoff}
        \left(\frac{3}{2}i\p\bpa(r^2\xi_R(r^2))\right)_{|_{x_o}}=  \frac{2}{3}(\alpha_R+\beta_R)idx\wedge d\bar x-\frac{2}{3}(\beta_R-5\alpha_R)(idy\wedge d\bar y - idz\wedge d\bar z),
\end{equation}
where $\alpha_R=\alpha_R(r)=2\frac{r^2}{R^2}\xi'(r^2/R^2)+\frac{r^4}{R^4}\xi''(r^2/R^2)$ and $\beta_R=\beta_R(r)=\xi(r^2/R^2)+\frac{r^2}{R^2}\xi'(r^2/R^2)$, which are both $O(1)$ uniformly in $R$.\par
Going now back to expression \eqref{cutco}, and plugging in \eqref{conoid}, \eqref{logdiag}, \eqref{espcut} and \eqref{expcutoff} at the point $x_o$, gives
\begin{equation}\label{espcoordcutoff}
    \begin{aligned}
        (\omega_R^2)_{|_{x_o}}= & -2(dx\wedge d\bar x \wedge dy\wedge d\bar y+dx\wedge d\bar x \wedge dz\wedge d\bar z+dy\wedge d\bar y \wedge dz\wedge d\bar z)\\
        & + (\alpha_R+\beta_R)idx\wedge d\bar x\wedge2r_o^{-2}(idz\wedge d\bar z-idy \wedge d\bar y)\\
        &-(\beta_R-5\alpha_R)(idy\wedge d\bar y - idz\wedge d\bar z)\wedge2r_o^{-2}(idz\wedge d\bar z-idy \wedge d\bar y)+O(r_o^{-4})\\
        = & -2(1-r_o^{-2}(\alpha_R+\beta_R))dx\wedge d\bar x \wedge dy\wedge d\bar y\\
        & -2(1+r_o^{-2}(\alpha_R+\beta_R))dx\wedge d\bar x \wedge dz\wedge d\bar z-2(dy\wedge d\bar y \wedge dz\wedge d\bar z)+O(r_o^{-4})\\
 = & (1-r_o^{-4}\gamma_R^2)^2 ( idx\wedge d\bar x+(1+r_o^{-2}\gamma_R)^{-2} idy\wedge d\bar y \\ &+(1-r_o^{-2}\gamma_R)^{-2}idz\wedge d\bar z )^2+O(r_o^{-4}),
    \end{aligned}
\end{equation}
where $$\gamma_R=\gamma_R(r)=\xi(r^2/R^2)+3\frac{r^2}{R^2}\xi'(r^2/R^2)+\frac{r^4}{R^4}\xi''(r^2/R^2)=O(1) \quad \text{uniformly in $R$}.$$
Applying now Michelsohn's result and arguing again as in \cite[Remark 2.8]{GS}, we obtain 
\begin{equation}\label{cutcord}
    (\omega_R)_{|_{x_o}}=(1-r_o^{-4}\gamma^2)\left(idx\wedge d\bar x+(1+r_o^{-2}\gamma_R)^{-2} idy\wedge d\bar y+(1-r_o^{-2}\gamma_R)^{-2}idz\wedge d\bar z\right)+O(r_o^{-4}),
\end{equation}
from which it is easily seen to hold
\[
(\omega_R)_{|_{x_o}}^3=(\omega_{co,0}^3)_{|_{x_o}}+O(r_o^{-4}),
\]
i.e. the faster decay of the volume that we wanted. On top of this, from identities \eqref{conoid}, \eqref{logdiag} and \ref{fsrisc} we get 
\begin{equation}
\begin{cases}
    (idz\wedge d\bar z)_{|_{x_o}}=(r^2\pi^*\omega_{FS})_{|_{x_o}};\\
    (idy\wedge d \bar y)_{|_{x_o}}=(3r^2i\p\bpa\log r-r^2\pi^*\omega_{FS})_{|_{x_o}};\\
    (idx\wedge d\bar x)_{|_{x_o}}=(\omega_{co,0}-3r^2i\p\bpa\log r)_{|_{x_o}};
\end{cases}    
\end{equation}
which combined with equation \eqref{cutcord} allows us to obtain an asymptotic expression on the whole cut-off region $C_R$ for $\omega_R$, that is
\begin{equation}\label{globcutoff}
    \omega_R=\omega_{co,0}+\gamma_R(r)(4\pi^*\omega_{FS}-6i\p\bpa\log r)+O(r^{-4}).
\end{equation}
This last expression shows how the the slow decay part keeps being primitive (with respect to the cone metric), as it appears to only be multiplied by a scalar function, ensuring that it will not give \emph{harmful} contributions to the volume behaviour, and guarantee that on the whole $C_R$ it holds
\begin{equation}\label{newvoldecay}
    \omega_R^3=\omega_{co,0}^3+O(r^{-4}).
\end{equation}
Hence, if we go back to the setting obtained at the end of Section \ref{2} we actually have that
\begin{equation}
    f=O(r^{\tilde m}),
\end{equation}
where this time $\tilde m:=\min\{\lambda_0, 4\frac{q}{p}\}$, which is now more then sufficient to allow us to complete the deformation argument as we will briefly explain in the next subsection.
\subsection{Deforming to a genuine solution}
We can now go back to the deformation argument as described in \cite[Section 4]{GS}, and consider once again the Fu-Wang-Wu deformation (see \cite{FWW}) with the ansatz considered in \cite{GS}, i.e.
\begin{equation}
    \omega_\psi^2=\omega^2+i\p\bpa(\psi\omega), \qquad \text{for}\>\> \psi \in C^2, \> \omega_\psi^2>0,
\end{equation}
which allows us to reformulate the \emph{balanced Monge-Ampére} equation
\begin{equation}\label{balma}
    \omega_\psi^3=e^f\omega^3.
\end{equation}
Here, following \cite{Sz}, we consider a variation of the equation by introducing an evaluation of the input function at a point $s \in \tilde M$ away from the gluing region, by substituting $f$ with $f-ev_s(\psi)$ in \eqref{balma}, and rephrasing the new equation as
\begin{equation}\label{balopmod}
\tilde{F}(\psi):=\frac{\omega_\psi^n}{\omega^n}-e^{f-ev_s{\psi}}=0.
\end{equation}
We then proceed in the exact same way, and first of all obtain the linearized operator at $\psi=0$, that is
\begin{equation}
    \tilde{L}u=\Delta_\omega u+\frac{1}{2}|\p\omega|_\omega^2 u+u(s)e^f,
\end{equation}
for which we are able to reprove uniform invertibility. This time, thanks to the Kählerianity of $\tilde\omega$, we can obtain it without any further assumption (as compared to \cite[Lemma 3.4]{GS}). As in our previous work, we will be working in weighted Hölder spaces as considered in \cite[Section 3.2]{GS}.
\begin{lemma}\label{invlin}
For every $b \in (0,2)$ it exists a uniform $c>0$ such that for sufficiently small $\varepsilon$ the operator $\tilde{L}$ is (uniformly) invertible and for all $u \in C_{\varepsilon,b}^{2,\alpha}$ it holds
\[
||u||_{C_{\varepsilon,b}^{2,\alpha}} \leq c||\tilde{L}u||_{C_{\varepsilon,b+2}^{0,\alpha}}.
\]
\end{lemma}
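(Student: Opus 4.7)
The plan is to establish Lemma \ref{invlin} by the usual blow-up/contradiction strategy familiar from gluing constructions (as in \cite{Sz}, and as already used for the analogous statement in \cite{GS}), exploiting the new feature that $\tilde\omega$ is now genuinely Kähler, so that the zeroth-order term $\tfrac{1}{2}|\partial\omega|_\omega^2\, u$ disappears in all scale-limits and one does not need to appeal to any torsion-type smallness assumption.

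The first step is to suppose the inequality fails and extract a sequence $\varepsilon_n \to 0$ and $u_n \in C^{2,\alpha}_{\varepsilon_n, b}$ with $\|u_n\|_{C^{2,\alpha}_{\varepsilon_n, b}} = 1$ and $\|\tilde{L}u_n\|_{C^{0,\alpha}_{\varepsilon_n, b+2}} \to 0$. After choosing points $p_n$ at which (a fixed fraction of) the weighted norm is realized, one rescales the metric and the function according to the characteristic distance of $p_n$ to the nodal locus, and passes to a Cheeger--Gromov / $C^{2,\alpha}_{loc}$ limit. There are three natural regimes to examine: (a) $p_n$ stays in the compact part of $\tilde M$ away from the nodes, (b) $p_n$ sits in the small-resolution region at bounded $\omega_R$-distance from the exceptional $\mathbb{P}^1$, and (c) $p_n$ lies in the intermediate conical neck. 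In all cases the Kähler condition on the model limit (namely $\tilde\omega$, $\omega_{co,1}$ or $\omega_{co,0}$) causes the term $\tfrac12|\partial\omega|^2_\omega$ to vanish identically on the limit, which is the crucial simplification provided by the Kählerianity hypothesis.

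In case (a) the limit $u_\infty$ is a bounded function on $\tilde M_{\mathrm{reg}}$ satisfying $\Delta_{\tilde\omega} u_\infty + u_\infty(s)\, e^{f_\infty} = 0$, with controlled (removable) behaviour at the nodes since $b>0$; standard elliptic removable-singularity and maximum-principle arguments then force $u_\infty$ to be constant, and evaluating at $s$ forces that constant to vanish. In case (b) the limit $u_\infty$ lives on the small resolution $\hat{X}$ with the Candelas--de la Ossa metric $\omega_{co,1}$, decays like $O(r^{b})$ at infinity, and satisfies $\Delta_{\omega_{co,1}} u_\infty = 0$ (the evaluation term is now an $O(1)$ constant times $e^f$ at a point frozen far away and drops out in the rescaling used in the weight $b+2$). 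The required fact is that, for $b \in (0,2)$, the weight $b$ avoids the indicial roots of the asymptotically conical Laplacian on $\hat{X}$, so no nontrivial harmonic function with this decay rate exists. Case (c) reduces to the analogous weight analysis on the exact Calabi--Yau cone $(X,\omega_{co,0})$, again ruled out by $b \in (0,2)$. Each of these contradicts the normalization $\|u_n\|=1$.

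The main technical obstacle is the control of the evaluation term $u(s)e^f$ in the blow-up limit: one must verify that the point $s$, chosen away from the gluing region in $\tilde M$, behaves well under rescaling so that $u_n(s)$ passes to $u_\infty(s)$ in case (a) and contributes negligibly in cases (b) and (c). Once pointwise invertibility of $\tilde L$ for every small $\varepsilon$ is granted (which follows from Fredholm theory for the model limits, together with the triviality of their kernels established above), a standard open/closed continuity argument upgrades the estimate to a uniform one, yielding the constant $c$ independent of $\varepsilon$. Finally, uniform surjectivity is obtained by the usual duality/perturbation argument from the uniform a priori estimate, completing the invertibility statement.
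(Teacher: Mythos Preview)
Your proposal follows the same blow-up/contradiction scheme the paper uses: the paper explicitly says the argument is a simplified version of \cite[Lemma~3.4]{GS}, with the conifold $(X,\omega_{co,0})$ now playing the role of the flat model, and it only writes out the ``$\tilde M$ side'' limit --- your case~(a).

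The one place your sketch is looser than the paper is precisely that case~(a). The limit equation on $\tilde M_{\mathrm{reg}}$ is $\Delta_{\tilde\omega}u_\infty+u_\infty(s)=0$ (note $f_\infty\equiv 0$ since $\tilde\omega$ is Calabi--Yau away from the gluing region), which is \emph{not} Laplace's equation until one knows $u_\infty(s)=0$; so ``removable singularity and maximum principle $\Rightarrow$ constant'' is not immediate. The paper's route is to first integrate the equation over the \emph{singular} space $\tilde M$ --- and it is the hypothesis $b<2$ (not $b>0$, as you write) that makes the integration by parts across the nodes legitimate --- to conclude $u_\infty(s)=0$. Only then is $u_\infty$ harmonic, and $b<2$ is used again to say it blows up more slowly than the Green's function, hence is constant, hence zero. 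Your sketch arrives at the same endpoint but misattributes the role of the weight bound and elides the integration step, which on a genuinely singular $\tilde M$ is exactly the point the paper wants to highlight.
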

\begin{proof}
    The proof is actually a simplified version of the one of \cite[Lemma 3.4]{GS}, with the difference that the role that was of the flat metric on $\mathbb{C}^3\setminus\{0\}$ is now of the conifold $X$ endowed with the cone metric $\omega_{co,0}$. We will thus only show that there is no kernel involved on the "$\tilde M$ side". Indeed on $\tilde M$ the kernel of the "limit operator" ends up being described by
    \[
    \Delta_{\tilde{\omega}}u+u(s)=0.
    \]
    Multiplying this last expression with the volume form $\tilde\omega^3/3!$ and integrating everything on $\tilde M$ allows us to conclude (through integration by parts, which is possible thanks to the assumption $b<2$) that $u (s)=0$. Thus $u$ is harmonic on $\tilde M$, and at the singularity blows up slower then the laplacian's Green's function (again thanks to $b<2$), forcing $u$ to be constant. But now $u(s)=0$ imposes that the constant must be zero, resulting in a vanishing kernel.
\end{proof}
Once this is established, we see that searching solutions of equation \eqref{balopmod} (which clearly still produces Chern-Ricci flat balanced metrics) can be turned into a fixed point problem, and this can be achieved by considering the auxiliary operators $\hat{F}, E, G: C_{\varepsilon,b}^{2,\alpha}(M) \rightarrow C_{\varepsilon,b+2}^{0,\alpha}(M)$ defined as
\[
\hat{F}(\psi):=\frac{\omega_\psi^3}{\omega^3}, \quad E(\psi):=e^{f-ev_s(\psi)} \quad \text{and} \quad G(\psi)=e^fev_s(\psi).
\]
Expanding then
\[
\hat{F}(\psi)=\hat{F}(0)+L(\psi)+\hat{Q}(\psi),
\]
we can rephrase $\tilde{F}(0)=0$ as
\[
\hat{F}(0)+\tilde{L}(\psi)+\hat{Q}(\psi)+G(\psi)-E(\psi)=0, 
\]
which thanks to Lemma \ref{invlin} becames the fixed point problem
\begin{equation}
\psi=\tilde{L}^{-1}(E(\psi)-G(\psi)-\hat{F}(0)-\hat{Q}(\psi))=:N(\psi),
\end{equation}
with $N: C_{\varepsilon,b}^{2,\alpha}(M) \rightarrow C_{\varepsilon,b}^{2,\alpha}(M)$.\par

At this stage, we identify the (same) open set $U_\tau$, with $\tau>0$, given by
\[
U_\tau:=\{\varphi \in C_{\varepsilon,b}^{2,\alpha} \>|\> ||\varphi||_{C_{\varepsilon,b}^{2,\alpha}} < \tilde{c}\varepsilon^{(p+q)(b+2)+\tau} \} \subseteq C_{\varepsilon,b}^{2,\alpha},
\]
over which we get (again in a simplified way as in \cite{GS}) that $N$ is a contraction operator. On top of this, what we established in Subsection \ref{impr} allows us to actually see that
\[
N(U_\tau) \subseteq U_\tau,
\]
since it allows us to choose $p,q>0$ such that $p\tilde m-q(b+2)>\tau>0$, which for all $\psi \in U_\tau$ gives
\[
\begin{aligned}
||N(\psi)|||_{C_{\varepsilon,b}^{2,\alpha}} \leq & ||N(\psi)-N(0)|||_{C_{\varepsilon,b}^{2,\alpha}}+||N(0)|||_{C_{\varepsilon,b}^{2,\alpha}} \\
\leq & c\varepsilon^\tau ||\psi|||_{C_{\varepsilon,b}^{2,\alpha}}+||\tilde{L}^{-1}(1-e^f)|||_{C_{\varepsilon,b}^{2,\alpha}} 
\leq  c\varepsilon^\tau ||\psi|||_{C_{\varepsilon,b}^{2,\alpha}}+||f||_{C_{\varepsilon,b+2}^{0,\alpha}} \\
\leq & c(\varepsilon^{(p+q)(b+2)+2\tau}+\varepsilon^{p(b+2)+p\tilde m}) 
\leq  \tilde{c}\varepsilon^{(p+q)(b+2)+\tau}.
\end{aligned}
\]
Hence we are able to produce a Chern-Ricci flat balanced metric $\hat{\omega}$ on the small resolution $M$.
\begin{remark}
    Such a metric corresponds to a solution of the \emph{conformally balanced equation} from the Hull-Strominger system. It is thus a natural problem to try to understand, in the same fashion as Yau's proposed approach to Reid's Fantasy, if such small resolution could always carry solutions tu the Hull-Strominger system.
\end{remark}

\end{document}